	\newtheorem{theorem}{Theorem}
	\newtheorem{lemma}{Lemma}
\title{  Quartic Forms in Many Variables} 
\author{ Jan H. Dumke }
\begin{document}
\begin{abstract} 
We show that a quartic $p$-adic form with at least $3192$ variables possesses a non-trivial zero. 
We also prove new results on systems of cubic, quadratic and linear forms. 
As an example, we show that for a system comprising two cubic forms $132$ variables are sufficient.
\end{abstract}
\maketitle	
\let\thefootnote\relax\footnotetext{2010 Mathematics Subject Classification. 11D72 (11D88, 11E76, 11G25)}
\let\thefootnote\relax\footnotetext{Key words and phrases. Artin's conjecture, $p$-adic forms, forms in many variables} 

\section{Introduction}
Let $p$ be a rational prime and $F_1,\dots, F_r\in \mathbb{Q}_p[x_1,\dots,x_n]$ be  forms with respective degrees $d_1,\dots,d_r$. E. Artin conjectured in the
1930s, that $F_1, \dots , F_r$ have a common non-trivial zero provided
\begin{align} n> d_1^2+\dots+d_r^2.\notag \end{align} 
Unfortunately, this has been verified  merely for a single quadratic (Hasse \cite{Hasse}), a single cubic  (Lewis \cite{Lewis1}) and a system comprising two quadratic forms (Demyanov \cite{MR0080689} and independently Birch, Lewis and Murphy \cite{MR0136582}). In fact
counterexamples are known for many  $(d_1,\dots,d_r)$. 
Although false in general Bauer \cite{MR0013127} has shown there is a finite non-negative integer $v(d_1,\dots,d_r)$, independent of $p$, such that  $F_1,\dots, F_r$ possess a  non-trivial zero whenever \begin{align}n>v(d_1,\dots,d_r) \notag.\end{align} His proof reduces the problem to diagonal forms, which have been studied extensively (see in particular \cite{MR0153655}). Refined subsequent  results use quasi-diagonalisation techniques. 
The best general bound is due to Wooley \cite{MR1606240}. For a system comprising $r$ forms of degree $d$ he  showed that  $n>(rd^2)^{2^{d-1}}$  suffices. 
For a number of degrees better bounds are available. Firstly, we can extract better estimates from Wooley's proof for specifc $d$. Secondly, Heath-Brown \cite{MR2595750} considerably improved these for a single quartic  by establishing $v(4)\leq 4220$.   
His proof has been adapted by Zahid \cite{MR2541024} to show $v(5)\leq 4562911$ (Note that in this case the conjecture 
has been confirmed if $p>7$. See \cite{2013arXiv1308.0999D}). Heath-Brown's method provides better results if the involved degrees are not multiples of $p$. 
The purpose of this paper is to develop a variant yielding improved bounds if $p$ does divide the degree.  
\begin{theorem}\label{maintheorem1}
$v(4)\leq  3191$
\end{theorem}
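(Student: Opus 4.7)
\medskip
\noindent\textbf{Sketch of a proof plan.} The approach is to bound $v_p(4)$ separately for each prime $p$ and take the maximum. The introduction indicates that Heath-Brown's method in \cite{MR2595750} is efficient when $p \nmid d$, while the novelty of this paper is a variant tailored to $p \mid d$. For $d = 4$ this dichotomy is between odd primes and $p = 2$.

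For odd $p$: apply Heath-Brown's quartic method, tracking constants more carefully. The argument proceeds by $p$-adic minimization of the form $F$ (so that $F \not\equiv 0 \pmod p$), finding a non-singular zero of the reduction $\bar F$ over $\mathbb{F}_p$, and then Hensel-lifting to $\mathbb{Q}_p$. The coprimality $\gcd(p, 4) = 1$ makes the lifting step unobstructed and should allow one to push the bound down from Heath-Brown's $4220$ to $3191$.

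For $p = 2$: develop a variant that works when $p \mid d$. Here the difficulty is that after $p$-adic minimization the reduction $\bar F$ over $\mathbb{F}_2$ typically has a large singular locus, so Hensel lifting from a single non-singular point is unavailable. The plan is to use a successive-approximation scheme based on the expansion
\[
F(x + 2y) = F(x) + 2\,L(x;y) + 4\,Q(x;y) + 8\,C(x;y) + 16\,F(y),
\]
in which $L, Q, C$ are auxiliary forms of degrees $3, 2, 1$ in $y$ respectively (and complementary degrees in $x$). Starting from a zero $x_0$ of $F$ modulo a suitable power of $2$, one lifts step by step, solving for $y$ from the lower-degree equations in turn. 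This is precisely where the sharpened estimates for systems of cubic, quadratic and linear forms announced in the abstract feed into the argument.

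The main obstacle will be the simultaneous bookkeeping of (a) the number of variables consumed at each minimization and lifting step, and (b) the dimension of the linear subspace on which all the auxiliary equations can be solved in parallel. Balancing the loss of variables against the gains from the subsidiary bounds is what should ultimately yield the clean threshold $n > 3191$, with the constant $3191$ expected to emerge from the worst case $p = 2$.
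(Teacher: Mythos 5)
Your proposal is a strategy outline rather than a proof, and it misses the concrete mechanism by which the bound $3191$ actually arises. First, the odd-prime half of your plan is aimed at the wrong target: for $p$ odd Heath-Brown's method already gives roughly $313$ variables for a quartic, so nothing needs to be ``pushed down from $4220$ to $3191$'' there; the paper simply quotes the odd-$p$ result and the entire content of the theorem is the dyadic case. Second, for $p=2$ your Newton-type scheme --- start from a zero of $F$ modulo a power of $2$ and lift step by step by solving the auxiliary equations in $F(x+2y)=F(x)+2L+4Q+8C+16F(y)$ --- is not what the argument requires and, as stated, does not work: you have no starting zero modulo a high power of $2$, and you give no reason why the auxiliary cubic/quadratic/linear conditions in $y$ can be solved compatibly at every stage. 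The paper instead performs a quasi-diagonalization: it successively chooses vectors $\mathbf{e}_1,\dots,\mathbf{e}_7$ so that on their span $F$ takes a prescribed shape, each new vector being a common nontrivial zero of an explicit system of cubic, quadratic and linear forms in $n$ variables. Counting these constraints yields the precise reduction $v(4)\leq\max\{V(4,10,20;2),V(3,18,56;2)\}$, where $V(r_3,r_2,r_1;p)$ bounds systems of $r_3$ cubics, $r_2$ quadratics and $r_1$ linear forms; this is the step your sketch leaves completely unspecified, and it is where the numbers come from.

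Beyond that reduction, two further ingredients are absent from your plan. One is the bookkeeping by ``levels'' $\nu(F(\mathbf{e}))\bmod 4$ of the chosen vectors, combined with a non-standard Hensel lemma (requiring $\nu(f(x))\geq\nu(f'(x))^2$, with an extra condition on $f''$ in the boundary case), which is what turns the quasi-diagonal shape into an actual $2$-adic zero; ordinary Hensel lifting at a nonsingular point of the reduction mod $2$, as in your sketch, is exactly what fails when $p\mid d$. The other is a new descent lemma for eliminating cubic forms when $p\equiv 2\pmod 3$, namely $V(r_3,r_2,r_1;p)\leq V(r_3-1,3r_3+r_2,3r_3+3r_2+r_1;p)$, which, iterated and combined with Heath-Brown's bounds for systems of quadratics, is what evaluates $V(4,10,20;2)$ and $V(3,18,56;2)$ within the stated threshold. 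Without the explicit reduction to these two quantities, the level analysis, the quantitative Hensel variant, and the improved cubic-elimination lemma, the constant $3191$ cannot be extracted from your outline.
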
 
On the other hand, Terjanian \cite{MR552474} has constructed a dyadic quartic in $20$ variables which lacks  a non-trivial zero.
Several results address specific systems of forms. 
To put the next result into perspective it suffices to know that 
 $v(3,3)\leq 213$ can be derived by combing \cite{MR2595750} and \cite{MR2541024}.
\begin{theorem}\label{maintheorem2}
$ v(3,3)\leq  131$
\end{theorem}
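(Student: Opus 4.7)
My plan is to mirror the strategy underlying Theorem~\ref{maintheorem1}, adapted from a single quartic to a pair of cubics. The starting point is a $p$-adic normalisation of the pair $(F_1,F_2)$: working up to $\mathrm{GL}_n(\mathbb{Z}_p)$-equivalence and rescaling, one selects a representative pair that maximises the $p$-adic valuation of a suitable discriminant- or resultant-type invariant attached to the coefficient lattice. Any non-trivial zero of the normalised pair descends to one of the original, so we may restrict to such a pair throughout. The defining property of normalisation is that the reductions $\overline{F}_1,\overline{F}_2\in\mathbb{F}_p[x_1,\dots,x_n]$ cannot vanish simultaneously on too large an $\mathbb{F}_p$-subspace, for otherwise a coordinate change would strictly increase the chosen invariant.

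I would then split the argument according to whether $p=3$. In the case $p\neq 3$, the sharp Hensel/Hessian lift of Heath-Brown--Zahid type applies: once one exhibits a common zero of $\overline{F}_1,\overline{F}_2$ at which the Jacobian has rank two, lifting to a $\mathbb{Q}_p$-zero is immediate, and standard slicing combined with Lewis' single-cubic bound pushes the required number of variables well below $131$. The delicate situation is $p=3$, where every partial derivative of a cubic form is divisible by $p$, so the naive Jacobian criterion is vacuous and the lifting procedure loses a factor of $p$ that the author has identified as the bottleneck for Artin-type bounds.

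Here I would invoke the variant developed for the proof of Theorem~\ref{maintheorem1}, which is tailor-made for the regime $p\mid d$. The strategy is to normalise not merely modulo $p$ but to a higher $p$-adic precision, spending a controlled number of variables per additional layer of normalisation in exchange for the structural information needed to circumvent the lifting loss. A descent on the number of variables terminates either by producing the desired common zero or by reducing to a residual pair for which the pencil $\lambda F_1+\mu F_2$, together with Lewis' theorem applied to its members, forces a non-trivial common zero.

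The principal obstacle, and the source of the explicit figure $131$, is the numerical optimisation in the $p=3$ case: one must account precisely for the variables consumed per normalisation step and balance this against the dimension required of the residual subspace so that the reduced pair of cubics admits a common zero. Any looseness in this accounting inflates the final bound, and since two cubics are handled simultaneously rather than a single quartic, the bookkeeping differs in detail from that of Theorem~\ref{maintheorem1} even though the conceptual scaffolding is essentially the same.
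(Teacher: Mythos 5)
Your sketch does not contain the mechanism by which the paper actually proves this bound, and the mechanisms you substitute for it would not work. The paper's proof is a quantitative reduction: choose one cubic $C$ of the pair, call the remaining forms $\mathbf{G}$, and successively construct vectors $\mathbf{e}_1,\dots,\mathbf{e}_4$ (linearly independent by Lemma \ref{kbiuovsegz7inffdkjs}) on whose span $C$ becomes diagonal while $\mathbf{G}$ vanishes identically; counting the conditions imposed on each new vector gives, for $p=3$, the unlabelled lemma of Section 4, $V(r_3,r_2,r_1;3)\leq V(r_3-1,3r_3+r_2,6r_3+3r_2+r_1;3)$, and for $p\neq 3$ one uses Zahid's Lemma \ref{awnfkefnsjdsjdsej} (with the refinement of Lemma \ref{awnfkefnsej} when $p\equiv 2 \bmod 3$). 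The zero of the diagonalised cubic is then produced by the ``level'' argument (the valuations $\nu(C(\mathbf{e}_i))$ modulo $3$) together with the non-standard Hensel Lemma \ref{akfjewajfiwjw}, which is precisely how the $p\mid d$ obstruction is circumvented. Iterating removes both cubics and reduces everything to a system of quadratic and linear forms, so that the entire variable count --- and hence the figure $131$ --- comes from Heath-Brown's bounds for systems of quadratics (Lemma \ref{sejviejfiea}). None of this appears in your proposal: there is no reduction to quadratic systems, no level argument, no use of the Hensel variant, and no bookkeeping at all, so the bound $131$ is never derived.

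The ingredients you propose instead have genuine gaps. A non-trivial zero of some member of the pencil $\lambda F_1+\mu F_2$, which is all Lewis' theorem can give, is not a common zero of $F_1$ and $F_2$; there is no Amer--Brumer-type principle for pairs of cubics, so ``the pencil together with Lewis' theorem forces a non-trivial common zero'' is unsupported. Likewise, minimising (or maximising) the valuation of a discriminant/resultant-type invariant only prevents the reductions $\overline{F}_1,\overline{F}_2$ from vanishing on too large an $\mathbb{F}_p$-subspace; it does not produce a common $\mathbb{F}_p$-point at which the Jacobian has rank two, so your Hensel step has nothing to lift even when $p\neq 3$, and for $p=3$ the difficulty you correctly identify is simply restated rather than overcome. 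The paper's answer to both problems is the explicit construction of a diagonalising subspace plus Lemma \ref{akfjewajfiwjw}; without that (or a worked-out substitute, together with the numerical accounting), the proposal does not establish $v(3,3)\leq 131$.
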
 
The proofs of  Theorem \ref{maintheorem1} and \ref{maintheorem2} rely on  results 
 for  systems comprising a  number of quadratic 
forms. 
 These allow us to impose certain constraints on the shape of the forms involved. By applying Hensel's Lemma we then establish a non-trivial zero.
Both theorems would enormously profit from better bounds on systems of quadratics. The method can be readily adapted for other degrees. 
\section{Preliminaries} 
We introduce a few Lemmas we shall need in due course. For ease of notation we write $V(r_3,r_2,r_1;p)$ for the least integer such that every system comprising $r_3$ cubic, $r_2$ quadratic and $r_1$ linear $p$-adic forms possesses a non-trivial zero as soon as $n>V(r_3,r_2,r_1;p)$. Concerning  quadratics the following bounds  as found in \cite{MR2595750} will be enough.
\begin{lemma}\label{sejviejfiea}
\begin{align}
V(0,r,0;p)\leq \notag
\begin{cases}
2r^2-16 & \text{if $r\geq 6$ is even}\\
2r^2-14 & \text{if $r\geq 7$ is odd}\\
\end{cases}
\end{align}
\end{lemma}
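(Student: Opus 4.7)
My plan is an induction on $r$, reducing systems of $r$ quadratics to systems of $r-1$ quadratics at a controlled cost of $O(r)$ variables. The base cases come from known results on small systems: $V(0,1,0;p) \leq 4$ (Hasse) and $V(0,2,0;p) \leq 8$ (Demyanov, and independently Birch--Lewis--Murphy). These can be iterated a few times to reach $r = 6$ and $r = 7$, anchoring the two parity cases of the induction.

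For the inductive step, given forms $Q_1, \dots, Q_r$ in $n$ variables over $\mathbb{Q}_p$, I would construct a linear subspace $W$ of dimension at least $n - (4r + O(1))$ on which $Q_r$ vanishes identically; restricting $Q_1, \dots, Q_{r-1}$ to $W$ and invoking the inductive hypothesis yields a common non-trivial zero. The construction of $W$ would employ Heath-Brown's quasi-diagonalisation: decompose the variables along a simultaneous $p$-adic valuation filtration, then exploit the classical fact that a $p$-adic quadratic form has anisotropic dimension at most $4$. The key point is that the cost of killing $Q_r$ scales linearly with $r$, not with $n$, because the blocks on which $Q_r$ cannot be zeroed out are controlled by the $\mathbb{F}_p$-ranks where it interacts with the other $Q_i$. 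The parity split between $2r^2 - 16$ and $2r^2 - 14$ then reflects the fact that odd-dimensional $p$-adic quadratic forms have anisotropic dimension at most $3$, whereas even-dimensional ones can have anisotropic dimension $4$, shifting the base-case estimate accordingly.

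The main obstacle is obtaining the correct additive cost of $4r + O(1)$ per form removed. A naive application of the isotropic-subspace bound gives only $V(0,r,0;p) \leq 2\,V(0,r-1,0;p) + O(1)$, which telescopes to an exponential bound and is useless here. The refinement needed is that, after quasi-diagonalisation, one need only zero $Q_r$ on a subset of variables proportional to the number of other forms, and this requires careful bookkeeping of how the $\mathbb{F}_p$-reductions of the $Q_i$ interlock across the valuation layers. Once the additive recurrence $V(0,r,0;p) \leq V(0,r-1,0;p) + 4r + O(1)$ is established, telescoping yields $V(0,r,0;p) \leq 2r^2 + O(r)$, and tracking the base-case constants precisely under the parity distinction produces the stated inequalities.
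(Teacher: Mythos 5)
First, note that the paper does not prove this lemma at all: it is quoted verbatim from Heath-Brown \cite{MR2595750}, so what you are attempting is a reconstruction of an external result. More importantly, your reconstruction has a fatal gap at its central step. You propose to find a subspace $W$ of dimension at least $n-(4r+O(1))$ on which $Q_r$ vanishes identically. No such subspace exists in general: over $\mathbb{Q}_p$ the anisotropic dimension of a quadratic form is at most $4$, so a nondegenerate form in $n$ variables has Witt index at most $n/2$, and every subspace on which it vanishes identically has codimension at least $(n-4)/2$. This obstruction is intrinsic to the single form $Q_r$ and is independent of how its reduction ``interlocks'' with the other forms, so no bookkeeping of $\mathbb{F}_p$-ranks across valuation layers can lower the cost from roughly $n/2$ to $O(r)$; the only recurrence your construction can deliver is the doubling one you yourself discard as useless. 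The additive recurrence that actually holds (Leep's argument, which is the engine behind all bounds of the shape $2r^2+O(r)$) runs in the opposite direction: one grows, one dimension at a time and at codimension-$O(r)$ cost per dimension, a subspace of \emph{small} dimension (five suffices) on which the other $r-1$ forms vanish identically, and then uses that a single quadratic form in at least five variables over $\mathbb{Q}_p$ is isotropic. Your sketch supplies neither this reversal nor any substitute for it.

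Second, even granting the recurrence $V(0,r,0;p)\le V(0,r-1,0;p)+4r+O(1)$, the stated constants do not follow. Telescoping from the Hasse and Demyanov--Birch--Lewis--Murphy base cases yields Leep's classical $2r(r+1)+O(1)$, which at $r=6$ is about $80$, whereas the lemma claims $56$; so ``iterating the base cases a few times to reach $r=6$ and $r=7$'' is circular and cannot anchor the induction at $2r^2-16$ and $2r^2-14$. Those constants require genuinely stronger input for moderate $r$ (this is precisely what the cited work of Heath-Brown provides), not merely careful tracking of $O(1)$ terms. Finally, your explanation of the parity split conflates the parity of the number of forms $r$ with the parity of the dimension of a single form: the fact that odd-dimensional forms have anisotropic part of dimension at most $3$ concerns the latter and does not by itself explain why even $r$ receives the better constant.
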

An estimate particular efficient for systems with just one cubic is due to Zahid \cite{MR2541024}.
\begin{lemma}\label{awnfkefnsjdsjdsej}Suppose $p\neq  3$ and $r_3\geq 1$. Then
\begin{align}
V(r_3,r_2,r_1;p)\leq  V(r_3-1,6(r_3-1)+r_2,9r_3+6r_2+r_1;p). \notag
\end{align}
\end{lemma}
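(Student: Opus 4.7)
The plan is to eliminate the cubic $C_1$ from the given system $C_1,\dots,C_{r_3},Q_1,\dots,Q_{r_2},L_1,\dots,L_{r_1}$ in $n>V(r_3-1,6(r_3-1)+r_2,9r_3+6r_2+r_1;p)$ variables via a substitution of the form $\mathbf{x}=s\mathbf{a}+t\mathbf{b}$ with $\mathbf{a},\mathbf{b}\in\mathbb{Q}_p^n$ and $s,t\in\mathbb{Q}_p$. Let $\Phi_i$ denote the symmetric trilinear form associated with $C_i$, so that $\Phi_i(\mathbf{x},\mathbf{x},\mathbf{x})=C_i(\mathbf{x})$, and similarly $B_j$ the symmetric bilinear form of $Q_j$. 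Under this substitution each original form becomes a binary form in $(s,t)$; for instance,
\[
C_i(s\mathbf{a}+t\mathbf{b})=s^3C_i(\mathbf{a})+3s^2t\,\Phi_i(\mathbf{a},\mathbf{a},\mathbf{b})+3st^2\,\Phi_i(\mathbf{a},\mathbf{b},\mathbf{b})+t^3C_i(\mathbf{b}),
\]
and analogously the $Q_j$ and $L_k$ yield binary quadratic and binary linear forms. The hypothesis $p\neq 3$ enters here: it guarantees the coefficient $3$ is a unit, so that the polar linear and quadratic forms may be freely manipulated.

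I would first prepare $\mathbf{a}$ (for example by using Lewis's theorem on a single cubic to secure $C_1(\mathbf{a})=0$ together with further polar control) and then search for $\mathbf{b}$ in a subspace defined by auxiliary polar vanishings. The requirement that the resulting binary forms share a common nontrivial zero $(s_0,t_0)\in\mathbb{Q}_p^2$ becomes a system of equations on $\mathbf{b}$ alone. A careful accounting, which is the technical heart of the argument, shows that this auxiliary system comprises exactly $r_3-1$ cubics (the $C_i(\mathbf{b})$ for $i\geq 2$), $6(r_3-1)+r_2$ quadratics (the $Q_j(\mathbf{b})$ together with polar quadratics such as $\Phi_i(\mathbf{a},\mathbf{b},\mathbf{b})$ for $i\geq 2$ and further conditions ensuring binary-form compatibility), and $9r_3+6r_2+r_1$ linears (the $L_k(\mathbf{b})$, the polar linears $\Phi_i(\mathbf{a},\mathbf{a},\mathbf{b})$, the bilinears $B_j(\mathbf{a},\mathbf{b})$, and supplementary linear relations needed to arrange that the binary cubic arising from $C_1$ factors with an obvious root such as $s=0$ or $t=0$). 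Since $n$ exceeds the relevant $V$, the induction hypothesis delivers a nontrivial $\mathbf{b}$, and the common zero $(s_0,t_0)$ then produces $\mathbf{x}_0=s_0\mathbf{a}+t_0\mathbf{b}$ as a nontrivial common zero of the original system.

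The hard part will be the precise matching of the constants $6$ and $9$ in the recursion: each original cubic and quadratic must contribute exactly the claimed number of polar conditions, neither more nor less. This requires careful tracking of the polar identities generated by the substitution and an understanding of which redundancies emerge when multiple binary forms of various degrees are forced to share zeros. One must also arrange that $\mathbf{a}$ and $\mathbf{b}$ are linearly independent so that $\mathbf{x}_0\neq 0$, and invoke $p\neq 3$ at every step where a factor of $3$ appears in a polar expansion.
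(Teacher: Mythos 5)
You should first note that the paper does not actually prove this lemma: it is quoted from Zahid \cite{MR2541024}. The closest things to a model proof are the paper's own Lemma \ref{awnfkefnsej} and the $p=3$ lemma of Section 4, and measured against those your sketch has a genuine gap precisely at what you yourself call ``the technical heart'': the derivation of the constants $6(r_3-1)+r_2$ and $9r_3+6r_2+r_1$ is only asserted, and your two-vector substitution $\mathbf{x}=s\mathbf{a}+t\mathbf{b}$ cannot produce them. Relative to a single fixed vector $\mathbf{a}$, each cubic $C_i$ contributes exactly \emph{one} quadratic polar $\Phi_i(\mathbf{a},\mathbf{b},\mathbf{b})$ and \emph{one} linear polar $\Phi_i(\mathbf{a},\mathbf{a},\mathbf{b})$, and each quadratic exactly one bilinear $B_j(\mathbf{a},\mathbf{b})$; constants like $6$ and $9$ per cubic and $6$ per quadratic can only arise when the new vector is tested against \emph{several} previously constructed vectors, i.e.\ from an iterative construction of a flag $\mathbf{e}_1,\mathbf{e}_2,\dots$ (cost per new vector against $s$ old ones: $s$ quadratic and up to $\binom{s+1}{2}$ linear polars per cubic, $s$ linear polars per quadratic), with independence supplied by Lemma \ref{kbiuovsegz7inffdkjs}. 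This is exactly how the paper proves its analogues, and it is the step you have not supplied. A second structural problem: demanding that the induced binary forms ``share a common nontrivial zero $(s_0,t_0)$'' is not a polynomial condition on $\mathbf{b}$ of the degrees you list; the template argument avoids this by forcing the auxiliary system $\mathbf{G}$ to vanish \emph{identically} on the constructed span and $C$ to become diagonal there, after which the common zero is found by the level pigeonhole ($\nu(C(\mathbf{e}))\bmod 3$) and the Hensel variant (Lemma \ref{akfjewajfiwjw}), not by binary-form compatibility conditions.

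Two further points. Invoking Lewis's theorem to arrange $C_1(\mathbf{a})=0$ misses the logic of the reduction: what one needs is a zero of $C_1$ \emph{on the locus} $\mathbf{G}=0$, which is exactly what is assumed not to exist in the contradiction setup; in the paper's analogues the vectors $\mathbf{e}_i$ are instead chosen so that $\mathbf{G}$ vanishes on their span while $C$ is diagonalised, and $C(\mathbf{e}_i)\neq 0$ is then automatic. Finally, you locate the hypothesis $p\neq 3$ in the wrong place: dividing by $3$ (or $6$) to form symmetric polars is harmless over $\mathbb{Q}_p$ for every $p$ (and note the lemma is applied in this paper with $p=2$); what $p\neq 3$ really buys is the lifting step, where the derivative of $t\mapsto C(\mathbf{e}_it+\cdots)$ is $3C(\mathbf{e}_i)t^2+\cdots$ and hence has valuation $0$ when $p\nmid 3$ (equivalently, the reduced diagonal cubic has a nonsingular zero that Hensel can lift) --- compare the $p=3$ lemma, where precisely this fails and different constants result. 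Without the explicit vector-by-vector construction, the count of imposed forms, and the final level/Hensel argument, the proposal does not yet constitute a proof.
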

We shall later need to establish that certain vectors are linearly independent. Heath-Brown \cite{MR2595750} provides an adequate criterion.  
\begin{lemma}\label{kbiuovsegz7inffdkjs}
Let $F\in \mathbb{Q}_p[x_1,\dots,x_n]$ be a form of degree $d$, having only the trivial zero in $\mathbb{Q}_p$. Let $\mathbf{e}_1,\dots,\mathbf{e}_k$ be linearly independent vectors in $\mathbb{Q}_p$, and suppose that we have a non-zero vector $\mathbf{e}\in \mathbb{Q}_p^n$ such that the form 
\begin{align}
F_0(t_1,\dots,t_k,t):=F(t_1\mathbf{e}_1+\dots+t_k\mathbf{e}_k+t\mathbf{e}),\notag
\end{align}
in the indeterminates $t_1,\dots,t_k$ and $t$, contains no terms of degree one in $t$. Then the set $\{\mathbf{e}_1,\dots,\mathbf{e}_k,\mathbf{e}\}$ is linearly independent.
\end{lemma}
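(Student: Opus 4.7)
The plan is to argue by contradiction. Suppose $\{\mathbf{e}_1,\dots,\mathbf{e}_k,\mathbf{e}\}$ is linearly dependent; since the first $k$ vectors are independent and $\mathbf{e}\neq\mathbf{0}$, we may write $\mathbf{e}=\sum_{i=1}^{k}c_i\mathbf{e}_i$ with $(c_1,\dots,c_k)\neq(0,\dots,0)$. First I would pass to the auxiliary form $G(s_1,\dots,s_k):=F(s_1\mathbf{e}_1+\dots+s_k\mathbf{e}_k)$, which is homogeneous of degree $d$. Because the $\mathbf{e}_i$ are linearly independent, a non-trivial zero of $G$ would yield one of $F$, and so $G$ has only the trivial zero in $\mathbb{Q}_p^{k}$. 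In this notation the hypothesis rewrites as $F_0(t_1,\dots,t_k,t)=G(t_1+c_1t,\dots,t_k+c_kt)$.

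The key step is to extract an identity from the vanishing of the degree-one-in-$t$ part of $F_0$. Writing $F_0$ as a polynomial in $t$ with coefficients in $\mathbb{Q}_p[t_1,\dots,t_k]$, the coefficient of $t$ equals $\left.\partial F_0/\partial t\right|_{t=0}=\sum_{i=1}^{k}c_i\,(\partial G/\partial s_i)(t_1,\dots,t_k)$, so the hypothesis is precisely the polynomial identity $\sum_{i=1}^{k}c_i\,\partial G/\partial s_i\equiv 0$ in $\mathbb{Q}_p[s_1,\dots,s_k]$. To exploit this, I would fix arbitrary $(t_1,\dots,t_k)$ and consider the one-parameter family $\phi(u):=G(t_1+c_1u,\dots,t_k+c_ku)$; the chain rule together with the above identity applied at the shifted arguments gives $\phi'(u)\equiv 0$, so $\phi$ is constant in $u$.

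Finally, specialising $t_1=\dots=t_k=0$ and invoking the homogeneity of $G$ yields $u^{d}G(c_1,\dots,c_k)=\phi(u)=\phi(0)=G(0,\dots,0)=0$ for every $u\in\mathbb{Q}_p$, and hence $G(c_1,\dots,c_k)=0$. Since $(c_1,\dots,c_k)\neq\mathbf{0}$ this contradicts the fact, established above, that $G$ has only the trivial zero. The one bookkeeping point I expect to be the main obstacle is checking cleanly that ``no terms of degree one in $t$'' translates into the vanishing of the directional derivative $\sum_i c_i\,\partial G/\partial s_i$ as a polynomial, and that $G$ genuinely inherits the ``only trivial zero'' property from $F$; once these are in place the argument is purely formal and requires no $p$-adic analytic input.
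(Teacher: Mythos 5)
Your argument is correct. Note that the paper itself gives no proof of this lemma; it is quoted from Heath-Brown's work on $p$-adic zeros, where the proof runs along the same lines as yours. Your two ``bookkeeping'' points are indeed fine: in characteristic zero the coefficient of $t$ in $F_0$ is exactly $\partial F_0/\partial t|_{t=0}=\sum_i c_i\,\partial G/\partial s_i(t_1,\dots,t_k)$, and $G$ inherits anisotropy because a non-trivial zero $(s_1,\dots,s_k)$ of $G$ gives the non-zero vector $\sum_i s_i\mathbf{e}_i$ as a zero of $F$. The middle step can be compressed if you wish: instead of proving $\phi$ is constant, specialise $t_i=c_i\lambda$ (or invoke Euler's identity), so that the vanishing coefficient of $t$ in $F\bigl((\lambda+t)\mathbf{e}\bigr)=(\lambda+t)^dF(\mathbf{e})$ reads $d\lambda^{d-1}F(\mathbf{e})=0$, giving $F(\mathbf{e})=0$ directly and the same contradiction.
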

We denote the $p$-adic valuation of $x\in \mathbb{Q}_p$ by $\nu(x)$. The following
non-standard variant of Hensel's Lemma is crucial to our proofs.
\begin{lemma}\label{akfjewajfiwjw}
Let $f$ be a polynomial over $\mathbb{Z}_p$ and suppose there exists an integer $x$ such that
 $\nu(f(x))\geq\nu(f'(x))^2$
 and, if equality holds, in addition $\nu(f''(x)\slash 2)\geq 1$. Then there exists a $p$-adic integer $y$ such that $f(y)=0$ and $y=x\pmod{p}$.
\end{lemma}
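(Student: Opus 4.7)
The plan is to reduce the statement to the classical version of Hensel's Lemma, which requires the strict inequality $\nu(f(x)) > 2\nu(f'(x))$. Setting $k := \nu(f'(x))$, the hypothesis reads $\nu(f(x)) \geq 2k$. When $\nu(f(x)) > 2k$ the classical statement applies directly, so the content of the lemma lies entirely in the equality case $\nu(f(x)) = 2k$, where the extra hypothesis $\nu(f''(x)/2) \geq 1$ is in force. The idea is to perturb $x$ by $p^{k}t$ for a suitable integer $t$, producing a new point $x_1 = x + p^{k}t$ that satisfies the strict hypothesis; since $x_1 \equiv x \pmod{p}$ (assuming $k \geq 1$, which is the only case in which the congruence $y \equiv x \pmod p$ with $f(y)=0$ is compatible with $\nu(f(x))=2k$), classical Hensel applied to $x_1$ will yield the required root.

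Write $f(x) = p^{2k}\alpha$ and $f'(x) = p^{k}\beta$ with $\alpha \in \mathbb{Z}_p$ and $\beta \in \mathbb{Z}_p^{\times}$. For any integer $t$, the Taylor expansion gives
\[
f(x + p^{k}t) = p^{2k}\alpha + p^{2k}t\beta + p^{2k}t^{2}\cdot\frac{f''(x)}{2} + \sum_{j \geq 3} (p^{k}t)^{j}\frac{f^{(j)}(x)}{j!}.
\]
All coefficients $f^{(j)}(x)/j!$ lie in $\mathbb{Z}_p$ since $f \in \mathbb{Z}_p[X]$, so the tail over $j \geq 3$ contributes valuation at least $3k \geq 2k+1$. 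The quadratic term has valuation at least $2k+1$ by the hypothesis on $f''(x)/2$. Reducing modulo $p^{2k+1}$ one finds $f(x + p^{k}t) \equiv p^{2k}(\alpha + t\beta)$, and since $\beta$ is a unit we can solve $t\beta \equiv -\alpha \pmod p$; the resulting $x_1 = x + p^{k}t$ satisfies $\nu(f(x_1)) \geq 2k+1$.

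It remains to check that the derivative has not degenerated. Expanding $f'(x_1) = p^{k}\beta + p^{k}t f''(x) + O(p^{2k})$, the hypothesis $\nu(f''(x)/2) \geq 1$ ensures $\nu(f''(x)) \geq 1$ for odd $p$ and $\nu(f''(x)) \geq 2$ for $p=2$, so in either case the correction term has valuation at least $k+1$ and hence $\nu(f'(x_1)) = k$. Thus $\nu(f(x_1)) \geq 2k+1 > 2\nu(f'(x_1))$, and classical Hensel applied at $x_1$ provides $y \in \mathbb{Z}_p$ with $f(y) = 0$ and $y \equiv x_1 \equiv x \pmod{p}$. The only real obstacle is the careful bookkeeping of valuations; the hypothesis is phrased in terms of $f''(x)/2$ rather than $f''(x)$ precisely so that the dyadic prime $p=2$ is handled uniformly with the odd primes.
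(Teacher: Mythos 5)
The paper states this lemma without proof (it is simply quoted as a known variant of Hensel's Lemma), so there is no internal argument to compare against; your reduction to the classical statement is the natural way to supply one, and the argument you give is correct. Two points deserve to be made explicit rather than left in passing. First, you read the hypothesis $\nu(f(x))\geq \nu(f'(x))^2$ as $\nu(f(x))\geq \nu\bigl(f'(x)^2\bigr)=2\nu(f'(x))$; this is clearly what is intended (the literal reading fails already for $f(X)=p(X^2+X+1)$ with $p=5$, $x=0$) and it is the reading consistent with every application in the paper, but you should say that you are correcting a typo. Second, the parenthetical in which you assume $k=\nu(f'(x))\geq 1$ in the equality case is doing real work: if $k=0$ and $\nu(f(x))=0$, then $f(x)$ is a unit while any root $y\equiv x\pmod p$ would force $p\mid f(x)$, so the conclusion is impossible even though the hypotheses can hold (take $f(X)=1+X+pX^2$, $x=0$, $p$ odd); thus the lemma as stated tacitly requires $\nu(f'(x))\geq 1$ when equality holds, and your proof is valid under that intended reading, which covers all of the paper's uses (the equality-case applications there have $\nu(f'(x))\geq 1$, the rest are strict-inequality instances). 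The bookkeeping itself is sound: the coefficients $f^{(j)}(x)/j!$ lie in $\mathbb{Z}_p$, the tail has valuation at least $3k\geq 2k+1$ once $k\geq 1$, choosing $t$ with $t\beta\equiv -\alpha\pmod p$ gives $\nu(f(x_1))\geq 2k+1$, and $\nu(f''(x))\geq 1$ (respectively $\geq 2$ when $p=2$) keeps $\nu(f'(x_1))=k$, so the classical lemma applies at $x_1$ and returns a root $y\equiv x_1\equiv x\pmod p$.
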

\section{Proof of Theorem \ref{maintheorem1}}
It is sufficient to establish the dyadic case, since Heath-Brown has shown that $313$ variables are enough if  $p$ is odd.
 As a first step we reduce this to a problem for a system of cubic, quadratic and linear forms. 
\begin{lemma} \label{awjdsfueesej}
$v(4)\leq  \max\{V(4,10,20;2),V(3,18,56;2)\}$
\end{lemma}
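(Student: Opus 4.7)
The plan is to reduce the quartic problem to a system of lower-degree forms via a Taylor expansion along a subspace on which $F$ vanishes. We may assume $p=2$. For vectors $\mathbf{e}_{1},\dots,\mathbf{e}_{m}$ and an auxiliary vector $\mathbf{y}$ one has the identity
\begin{align*}
F\!\Bigl(\mathbf{y}+\sum_{i=1}^{m}t_{i}\mathbf{e}_{i}\Bigr)
= F(\mathbf{y})+\!\sum_{|I|=1}\!t^{I}C_{I}(\mathbf{y})+\!\sum_{|I|=2}\!t^{I}Q_{I}(\mathbf{y})+\!\sum_{|I|=3}\!t^{I}L_{I}(\mathbf{y})+F\!\Bigl(\sum t_{i}\mathbf{e}_{i}\Bigr),
\end{align*}
with $C_{I},Q_{I},L_{I}$ cubics, quadratics and linears in $\mathbf{y}$, and $I$ running over multi-indices of the indicated total degree. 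If the $\mathbf{e}_{i}$ are chosen with $F(\mathbf{e}_{i})=0$ and $\mathbf{y}=\mathbf{y}^{*}$ is a common zero of all the $C_{I},Q_{I},L_{I}$, then $F(\mathbf{y}^{*}+\sum t_{i}\mathbf{e}_{i})$ collapses to $F(\mathbf{y}^{*})+F(\sum t_{i}\mathbf{e}_{i})$, a polynomial in the $m$ new variables alone, to which we shall apply the dyadic Hensel variant of Lemma \ref{akfjewajfiwjw}.

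I would treat first the favourable case $m=4$, where four linearly independent vectors $\mathbf{e}_{i}$ with $F(\mathbf{e}_{i})=0$ are available. The expansion yields exactly $\binom{4}{1}=4$ cubics, $\binom{5}{2}=10$ quadratics and $\binom{6}{3}=20$ linears in $\mathbf{y}$. Since $n>V(4,10,20;2)$, this system admits a common non-trivial zero $\mathbf{y}^{*}$, which via Lemma \ref{kbiuovsegz7inffdkjs} can be taken independent of $\mathbf{e}_{1},\dots,\mathbf{e}_{4}$. The residual quartic $F(\sum t_{i}\mathbf{e}_{i})$ lacks every monomial $t_{i}^{4}$ by the choice of $\mathbf{e}_{i}$, so a judicious one-parameter slice fed into Lemma \ref{akfjewajfiwjw} supplies the desired zero of $F$.

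If no such quadruple exists, I would fall back to $m=3$. The three-vector expansion intrinsically contributes $3$ cubics, $\binom{4}{2}=6$ quadratics and $\binom{5}{3}=10$ linears; the obstruction to extending to a fourth annihilating vector must then be converted into additional quadratic and linear constraints on $\mathbf{y}$, yielding the system of $3$ cubics, $18$ quadratics and $56$ linears covered by $V(3,18,56;2)$, after which an identical Hensel step closes the argument. The main difficulty throughout will be the dyadic Hensel step itself: because $p=2$ divides the degree, the standard Hensel inequality fails, and the one-parameter slice must be chosen so carefully that $\nu(f(x))\geq\nu(f'(x))^{2}$ holds along with the extra congruence $\nu(f''(x)/2)\geq 1$ in the equality case; this requires fine control over the $2$-adic size of $F(\mathbf{y}^{*})$ and of the partial derivatives of the four-variable residual quartic. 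A secondary but still substantive task is to extract from the failure of the four-vector configuration the precise $12$ extra quadratic and $46$ extra linear conditions that upgrade the three-vector count to $(3,18,56)$.
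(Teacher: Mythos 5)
Your setup is inverted, and in the only case that actually requires proof it is vacuous. The lemma is proved by contradiction: one assumes the quartic $F$ has \emph{only} the trivial zero (otherwise there is nothing to show) and derives a zero anyway. Under that assumption there are no nonzero vectors with $F(\mathbf{e}_i)=0$ at all, so your ``favourable case'' of four linearly independent zeros of $F$, and likewise your three-vector fallback, never occur; and if such vectors did exist the lemma would already be trivial. Your own ingredients are even mutually inconsistent: Lemma \ref{kbiuovsegz7inffdkjs}, which you invoke to get independence of $\mathbf{y}^{*}$ from the $\mathbf{e}_i$, explicitly requires $F$ to have only the trivial zero, contradicting your hypothesis $F(\mathbf{e}_i)=0$. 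The correct use of the expansion is the other way around: with $F$ anisotropic one successively chooses \emph{new} vectors $\mathbf{e}$ annihilating the cubic, quadratic and linear forms in $\mathbf{e}$ produced by the expansion over the span of the previously chosen vectors (when passing from four vectors to a fifth these number $4$, $10$ and $20$, which is where $V(4,10,20;2)$ enters), so that $F$ restricted to the span becomes diagonal with \emph{nonzero} coefficients $F(\mathbf{e}_i)$.

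The second gap is that you have no substitute for the heart of the argument: the $2$-adic level analysis. After diagonalising, the paper classifies vectors by $\nu(F(\mathbf{e}))\bmod 4$, manipulates sums $\mathbf{e}_s+\mathbf{e}_t$ to manufacture diagonal coefficients of valuations $0,1,2,3$, and only then finds an approximate zero modulo a suitable power of $2$ to which the nonstandard Hensel variant (Lemma \ref{akfjewajfiwjw}) applies; the case distinctions on the residual cross coefficients $c_{45}, c_{56}, c_{67}$, etc., are precisely what makes the dyadic case work. Saying ``a judicious one-parameter slice fed into Lemma \ref{akfjewajfiwjw} supplies the desired zero'' concedes exactly the point that has to be proved. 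Finally, the parameters $(3,18,56)$ do not come from a three-vector configuration with ``$12$ extra quadratic and $46$ extra linear conditions'': they arise when a \emph{seventh} vector is adjoined to six previous ones while tolerating the cross terms $x_ix_7^3$ ($i=4,5,6$) and $x_ix_jx_7^2$, giving $3$ cubic, $21-3=18$ quadratic and $\binom{8}{3}=56$ linear constraints. As written, the proposal does not yield the stated bound.
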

\begin{proof}[Proof of Lemma \ref{awjdsfueesej}] Let $F\in \mathbb{Q}_2[x_1,\dots,x_n]$ be a quartic form. Suppose that $F$ does  have the trivial zero only and \begin{align}\label{awjdsfueesej8w8328e2}n>\max\{V(4,10,20;2),V(3,18,56;2)\}.\end{align}
We shall construct a subspace of $\mathbb{Q}_2^n$ on which $F$ is of special shape. By applying Hensel's Lemma we then find a non-trivial zero.  In order to see how we can manipulate the shape assume that  $\mathbf{e}_1, \dots,\mathbf{e}_{k-1}\in \mathbb{Q}_2^n$ are linearly independent.  If $\mathbf{e}$ is an additional vector  we write
\begin{align}
F(&x_1\mathbf{e}_1+\dots+x_k\mathbf{e}_k+x\mathbf{e})\notag
= F(x_1\mathbf{e}_1+\dots+x_k\mathbf{e}_k)+\\&\sum_{\sum d_i=3}\mathbf{x}^{\mathbf{d}}L_{\mathbf{d}}(\mathbf{e})x+\sum_{\sum d_i=2}\mathbf{x}^{\mathbf{d}}Q_{\mathbf{d}}(\mathbf{e})x^2+\sum_{\sum d_i=1}\mathbf{x}^{\mathbf{d}}C_{\mathbf{d}}(\mathbf{e})x^3+F(\mathbf{e})x^4,\notag
\end{align}
 where $L_{\mathbf{d}}$ are linear, $Q_{\mathbf{d}}$ quadratic  and $C_{\mathbf{d}}$ cubic forms. 
If we want that some of these forms (and respective monomials) vanish, they  must have $\mathbf{e}$ as a common non-trivial zero.
This can be ensured at the cost of a  condition on $n$. 
If, in particular, $L_{\mathbf{d}}(\mathbf{e})=0$ for all $\mathbf{d}$ such that $\sum{d_i}=1$, then   $\mathbf{e}_1, \dots,\mathbf{e}_{k-1},\mathbf{e}$ are by Lemma \ref{kbiuovsegz7inffdkjs} linearly independent.\\ 
Thus we can   successively  choose vectors $\mathbf{e}_1, \dots,\mathbf{e}_5$ such that
\begin{align}
F(\mathbf{e}_1x_1+\dots+\mathbf{e}_5x_5)=F(\mathbf{e}_1)x_1^4+\dots +F(\mathbf{e}_5)x_5^4,\notag
\end{align}
 by imposing at most $4$ cubic, $10$ quadratic and  $20$ linear constraints (see (\ref{awjdsfueesej8w8328e2})). Clearly, $F(\mathbf{e}_i)\neq 0$ for all $i$.  We show that we may assume
\begin{align}\label{wejfliejfiwaiw}\nu(\mathbf{e}_1)=0,\qquad \nu(\mathbf{e}_2)=1,\qquad\nu(\mathbf{e}_3)=2.\end{align}
We say that a non-zero vector $\mathbf{e}\in \mathbb{Q}_2^n$ has level $r$ if $\nu(F(\mathbf{e}))= r \pmod{4}$.  
If $\mathbf{e}_1,\dots, \mathbf{e}_5$ have three different levels, then $(\ref{wejfliejfiwaiw})$ follows by relabelling and rescaling.
Otherwise we can find three vectors $\mathbf{e}_i$, $\mathbf{e}_j$,$\mathbf{e}_k$  of the same level $r$ for some $1\leq i<j<k\leq 5$. 
 As we may assume that $F(\mathbf{e}_i),F(\mathbf{e}_j),F(\mathbf{e}_k)\in \{ -2^r,2^r\}$, there are 
 $s,t\in  \{i,j,k\}$ such that $F(\mathbf{e}_s)+F(\mathbf{e}_t)= \pm 2^{r+1} \pmod{2^{r+2}}$.  
We replace $\mathbf{e}_s$ and $\mathbf{e}_t$ with $\mathbf{e}_s':=\mathbf{e}_s+\mathbf{e}_t$, which has level $r+1\pmod{4}$, and a newly chosen vector $\mathbf{e}_t'$. If the resulting  vectors still have two levels only, we 
repeat the argument until we obtain three vectors of different levels.\\ 
We  choose new vectors $\mathbf{e}_4$  of maximal level such that
 $F(\mathbf{e}_1x_1+\dots+\mathbf{e}_4x_4)$ is diagonal. We show that $\mathbf{e}_4$ has level $2$ at most. If $\mathbf{e}_4$ has level $3$, we  choose a new vector $\mathbf{e}_5$ such that $F(\mathbf{e}_1x_1+\dots+\mathbf{e}_5x_5)$ remains diagonal.
  By (\ref{wejfliejfiwaiw}) we may, after rescaling and relabelling, assume that 
\begin{align}\label{qwelri3qjiqiuq32}\nu(\mathbf{e}_1)=0,\quad \nu(\mathbf{e}_2)=1,\quad\nu(\mathbf{e}_3)=2,\quad\nu(\mathbf{e}_4)=3,\quad\nu(\mathbf{e}_5)=0.\end{align}
Thus we can set $x_1,x_5=1$ and pick $x_i\in \{0,1\}$ for $2\leq i\leq 4$ such that $F(\mathbf{e}_1x_1+\dots+\mathbf{e}_5x_5)=0\pmod{2^4}$.  The function $f(t):=F(\mathbf{e}_1t+\dots+\mathbf{e}_5x_5)$ then satisfies $\nu(f(1))\geq 4$, $\nu(f'(1))=2$ and $\nu(f''(1)\slash 2)\geq 1$. By Hensel's Lemma  $F$ has  a non-trivial zero, contrary to assumption. \\
We choose a new vector $\mathbf{e}_5$ of maximal level such that
\begin{align}
F(\mathbf{e}_1x_1+\dots+\mathbf{e}_5x_5)=&F(\mathbf{e}_1)x_1^4+F(\mathbf{e}_2)x_2^4+F(\mathbf{e}_3)x_3^4\notag \\&+F(\mathbf{e}_4)x_4^4+c_{45}x_4x_5^3+F(\mathbf{e}_5)x_5^4.\notag
\end{align}
By maximality $\mathbf{e}_5$ can be of level $2$ at most. We show that $\nu(F(\mathbf{e}_5))\leq 1$. Suppose, after rescaling, that $\nu(F(\mathbf{e}_4)),\nu(F(\mathbf{e}_5))=2$. By a case-by-case analysis of $\nu(c_{45})$ we establish a non-trivial zero.
If $\nu(c_{45})<\nu(F(\mathbf{e}_5))$ we use Hensel's Lemma to lift $\mathbf{e}_5$. In the case of $\nu(c_{45})=2$ it follows that  $2^{-2}F(\mathbf{e}_3+\mathbf{e}_4+\mathbf{e}_5)=0\pmod{2}$. Thus we can apply Hensel's Lemma to $f(t)=2^{-2}F(\mathbf{e}_3+\mathbf{e}_4+\mathbf{e}_5t)$.
If $\nu(c_{45})=3$ we may assume $\nu(\mathbf{e}_4+\mathbf{e}_5)>3$, since the case of four vectors of levels $0,1,2$ and $3$ has been discussed above (see (\ref{qwelri3qjiqiuq32})).  Thus we can choose $x_i\in \{0,2\}$ such that $ F(\mathbf{e}_1x_1+\mathbf{e}_2x_2+\mathbf{e}_3x_3+\mathbf{e}_4+\mathbf{e}_5)=0\pmod{2^7}$. Consequently, we can apply Hensel's Lemma to $f(t):=F(\mathbf{e}_1x_1+\mathbf{e}_2x_2+\mathbf{e}_3x_3+\mathbf{e}_4t+\mathbf{e}_5)$.  
 The case $\nu(c_{45})=4$ is slightly more involved. Since we may assume $\nu(\mathbf{e}_4+\mathbf{e}_5)>3$, it follows  that 
 $F(\mathbf{e}_3)=F(\mathbf{e}_i) \pmod{2^4}$ for some $i\in\{4,5\}$. 
 Thus  $\mathbf{e}_3':=\mathbf{e}_3+\mathbf{e}_i$ is a vector such that $\nu(\mathbf{e}_3')=3$. Assume without loss of generality that $i=4$. By (\ref{awjdsfueesej}) we can choose a new vector $\mathbf{e}_6$ such that
\begin{align}
F(\mathbf{e}_1x_1+&\mathbf{e}_2x_2+\mathbf{e}_3'x_3+\mathbf{e}_5x_5+\mathbf{e}_6x_6)=\notag \\
&F(\mathbf{e}_1x_1+\mathbf{e}_2x_2+\mathbf{e}_3'x_3+\mathbf{e}_5x_5)+F(\mathbf{e}_6)x_6^4.\notag
\end{align}
If $\nu(\mathbf{e}_6)=3$ then $\mathbf{e}_1,\mathbf{e}_2,\mathbf{e}_5,\mathbf{e}_6$ have levels $0,1,2$ and $3$, respectively, and we can proceed as in (\ref{qwelri3qjiqiuq32}). The same works for $\mathbf{e}_1,\mathbf{e}_2,\mathbf{e}_3',\mathbf{e}_6$ provided that $\nu(\mathbf{e}_6)=2$. If $\nu(\mathbf{e}_6)=1$ we  choose $x_1,x_3,x_5\in \{0,1\}$ such that $ F(2\mathbf{e}_1x_1+\mathbf{e}_2+\mathbf{e}_3'x_3+\mathbf{e}_5x_5+\mathbf{e}_6)=0 \pmod{2^5}$. If we set $f(t)=2^{-1}F(2\mathbf{e}_1x_1+\mathbf{e}_2+\mathbf{e}_3'x_3+\mathbf{e}_5x_5+\mathbf{e}_6t)$ then $\nu(f(1))\geq 4$, $\nu(f'(1))=2$ and $\nu(f''(1))\geq 1$ hold true and Hensel's Lemma can be applied. Similarly we can  find $x_2,x_3,x_5\in \{0,1\}$ such that $F(\mathbf{e}_1+\mathbf{e}_2x_2+\mathbf{e}_3'x_3+\mathbf{e}_5x_5+\mathbf{e}_6)=0 \pmod{2^4}$ provided $\nu(\mathbf{e}_6)=0$. Consequently, 
 Hensel's Lemma yields a non-trivial zero.\\
By (\ref{awjdsfueesej}) we can choose a new vector $\mathbf{e}_6$ of maximal level such that
\begin{align}
F(\mathbf{e}_1&x_1+\dots+\mathbf{e}_6x_6)=F(\mathbf{e}_1)x_1^4+F(\mathbf{e}_2)x_2^4+F(\mathbf{e}_3)x_3^4+F(\mathbf{e}_4)x_4^4\notag \\&+c_{45}x_4x_5^3+F(\mathbf{e}_5)x_5^4+c_{46}x_4x_6^3+c_{56}x_5x_6^3+c_{456}x_4x_5x_6^2+F(\mathbf{e}_6)x_6^4\notag.
\end{align} 
By maximality $\mathbf{e}_6$ has level $1$ at most. Suppose after rescaling that $\nu(\mathbf{e}_5),\nu(\mathbf{e}_6)=1$. If $\nu(c_{56})<1$, we can lift $\mathbf{e}_6$ via Hensel's Lemma. If $\nu(c_{56})=1$, there are $x_2,x_5,x_6\in \{0,1\}$ such that $2^{-1}F(\mathbf{e}_2x_2+\mathbf{e}_5x_5+\mathbf{e}_6x_6)=0\pmod{2}$ and one of it's partial derivatives does not vanish modulo $2$. Thus there exists a non-trivial zero and we may assume that $\nu(c_{56})\geq 2$. By maximality $\mathbf{e}_5+\mathbf{e}_6$ can not have level $2$ or $3$. Hence we can find $x_1,x_2\in \{0,2\}$ such that $F(\mathbf{e}_1x_1+\mathbf{e}_2x_2+\mathbf{e}_5+\mathbf{e}_6) =0 \pmod{2^9}$. Consequently,  we can apply Hensel's Lemma to $f(t):=F(\mathbf{e}_1x_1+\mathbf{e}_2x_2+\mathbf{e}_5t+\mathbf{e}_6)$.\\
By (\ref{awjdsfueesej8w8328e2}) we can choose  final vector  $\mathbf{e}_7$ of maximal level such that 
\begin{align}
F(\mathbf{e}_1x_1&+\dots+\mathbf{e}_7x_7)=F(\mathbf{e}_1)x_1^4+F(\mathbf{e}_2)x_2^4+F(\mathbf{e}_3)x_3^4+F(\mathbf{e}_4)x_4^4\notag \\
&+c_{45}x_4x_5^3+F(\mathbf{e}_5)x_5^4+c_{46}x_4x_6^3+c_{56}x_5x_6^3+c_{456}x_4x_5x_6^2\notag \\
&+F(\mathbf{e}_6)x_6^4+c_{47}x_4x_7^3+c_{57}x_5x_7^3+c_{67}x_6x_7^3+c_{457}x_4x_5x_7^2\notag \\&
+c_{467}x_4x_6x_7^2+c_{567}x_5x_6x_7^2+F(\mathbf{e}_7)x_7^4
\notag.
\end{align}
By  maximality  $\mathbf{e}_7$ has level $0$. Suppose that $\nu(\mathbf{e}_6),\nu(\mathbf{e}_7)=0$. If $\nu(c_{67})<0$, we lift $\mathbf{e}_7$. In case of $\nu(c_{67})=0$ we  can find $x_1,x_2\in \{0,1\}$ such that $F(\mathbf{e}_1x_1+\mathbf{e}_6x_6+\mathbf{e}_7x_7)=0\pmod{2}$  and Hensel's Lemma can be applied.
 Finally, suppose that $\nu(c_{67})>0$. Since $\mathbf{e}_6+\mathbf{e}_7$ can not have level $1,2,3$ we can find $x_1\in \{0,1\}$ such that $F(\mathbf{e}_1x_1+\mathbf{e}_6+\mathbf{e}_7)=0\pmod{2^8}$. Lifting this zero completes the proof of Lemma \ref{awjdsfueesej}.
\end{proof}
In order to estimate the quantities of $V(4,10,20;2)$  and $V(3,18,56;2)$ we provide an improved estimate.
\begin{lemma}\label{awnfkefnsej}Suppose $p=2\pmod 3$ and $r_3\geq 1$. Then
\begin{align}
V(r_3,r_2,r_1;p)\leq  V(r_3-1,3r_3+r_2,3r_3+3r_2+r_1;p). \notag
\end{align}
\end{lemma}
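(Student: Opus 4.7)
The plan is to build a non-trivial common zero of the form $\mathbf{x} = \mathbf{y} + t \mathbf{e}_i$, exploiting the fact that $p \equiv 2 \pmod 3$ makes every element of $\mathbb{F}_p^*$ a cube, so that binomial cubic equations $at^3 = b$ with $\nu(a) \equiv \nu(b) \pmod 3$ are always solvable in $\mathbb{Q}_p$ by Hensel's Lemma. This is precisely what cubes being surjective modulo $p$ buys us over the weaker Lemma \ref{awnfkefnsjdsjdsej}.

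For any $\mathbf{e} \in \mathbb{Q}_p^n$ I consider the expansions
\begin{align}
C_j(\mathbf{y} + t\mathbf{e}) &= C_j(\mathbf{e}) t^3 + L_{j,\mathbf{e}}(\mathbf{y}) t^2 + Q_{j,\mathbf{e}}(\mathbf{y}) t + C_j(\mathbf{y}), \notag\\
Q_k(\mathbf{y} + t\mathbf{e}) &= Q_k(\mathbf{e}) t^2 + B_{k,\mathbf{e}}(\mathbf{y}) t + Q_k(\mathbf{y}), \notag
\end{align}
which define linear forms $L_{j,\mathbf{e}}, B_{k,\mathbf{e}}$ and a quadratic form $Q_{j,\mathbf{e}}$ in the variable $\mathbf{y}$. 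I then select three vectors $\mathbf{e}_1, \mathbf{e}_2, \mathbf{e}_3$ from the common zero set of $C_2, \ldots, C_{r_3}, Q_1, \ldots, Q_{r_2}, L_1, \ldots, L_{r_1}$; this zero set is non-trivial because the hypothesis on $n$ trivially implies $n > V(r_3-1, r_2, r_1; p)$. If any $\mathbf{e}_i$ already satisfies $C_1(\mathbf{e}_i) = 0$ we are done. Otherwise, by a level-combining argument patterned on the one in the proof of Lemma \ref{awjdsfueesej} but now working modulo $3$, I arrange that $\nu(C_1(\mathbf{e}_1)), \nu(C_1(\mathbf{e}_2)), \nu(C_1(\mathbf{e}_3))$ realise all three residue classes modulo $3$.

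Next I form the auxiliary system in $\mathbf{y}$ consisting of the $r_3 - 1$ cubics $C_2, \ldots, C_{r_3}$; the $r_2 + 3r_3$ quadratics $Q_1, \ldots, Q_{r_2}$ together with $Q_{j, \mathbf{e}_i}$ for $1 \leq j \leq r_3$, $1 \leq i \leq 3$; and the $r_1 + 3r_3 + 3r_2$ linear forms $L_1, \ldots, L_{r_1}$ together with $L_{j, \mathbf{e}_i}$ and $B_{k, \mathbf{e}_i}$. The hypothesis on $n$ then produces a non-trivial common zero $\mathbf{y}$. If $C_1(\mathbf{y}) = 0$ then $\mathbf{y}$ answers the original system; otherwise I pick $i$ with $\nu(C_1(\mathbf{e}_i)) \equiv \nu(C_1(\mathbf{y})) \pmod 3$, solve $C_1(\mathbf{e}_i) t^3 = -C_1(\mathbf{y})$ for $t \in \mathbb{Q}_p$, and set $\mathbf{x} = \mathbf{y} + t \mathbf{e}_i$. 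A routine check then shows $\mathbf{x}$ is a common zero of all original forms: the derived-form conditions on $\mathbf{y}$ and the vanishing of $C_j, Q_k, L_k$ (for $j \geq 2$) at $\mathbf{e}_i$ collapse the expansions, and the choice of $t$ kills $C_1(\mathbf{x})$. Non-triviality follows because $L_{1,\mathbf{e}_i}(\mathbf{e}_i) = 3 C_1(\mathbf{e}_i) \neq 0$ (using $p \neq 3$) rules out $\mathbf{y} = -t\mathbf{e}_i$.

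The main obstacle I expect is the level-completion step: ensuring three vectors $\mathbf{e}_i$ in the common zero locus of the reduced system that realise all three residue classes of $\nu \circ C_1$ modulo $3$. When few residues are initially available, pairs of vectors of coinciding level must be combined to yield a vector of a new level, in analogy with the modulus-$4$ arguments of Section 3. The rest of the argument is bookkeeping once that step is in place.
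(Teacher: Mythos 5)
Your expansion bookkeeping is sound and matches the constraint count in the statement ($r_3-1$ cubics, $3r_3+r_2$ quadratics, $3r_3+3r_2+r_1$ linears for the auxiliary system in $\mathbf{y}$), the Hensel step for $at^3=b$ with $\nu(a)\equiv\nu(b)\pmod 3$ is correct for $p\equiv 2\pmod 3$, and the non-triviality argument via $L_{1,\mathbf{e}_i}(\mathbf{e}_i)=3C_1(\mathbf{e}_i)$ is fine. However, the step you yourself flag as the main obstacle -- arranging that $\mathbf{e}_1,\mathbf{e}_2,\mathbf{e}_3$ realise all three levels of $\nu\circ C_1$ modulo $3$ -- is a genuine gap, and the analogy with the modulus-$4$ argument of Section 3 does not repair it. That combining argument works only because the form is \emph{diagonal} on the span of the two vectors being combined and the remaining forms vanish \emph{identically} on that span; you chose your $\mathbf{e}_i$ merely as individual non-trivial zeros of the reduced system (using $n>V(r_3-1,r_2,r_1;p)$), with no mutual cross-term conditions. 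Consequently $C_1(\mathbf{e}_i+\lambda\mathbf{e}_j)$ has uncontrolled cross terms, so its valuation cannot be steered, and worse, $\mathbf{e}_i+\lambda\mathbf{e}_j$ is in general no longer a common zero of the quadratics and of $C_2,\dots,C_{r_3}$, so it cannot replace $\mathbf{e}_i$ in your scheme at all. Nor is the three-level configuration guaranteed to exist inside the reduced zero locus: if $C_1$ restricted to (a plane inside) that locus behaves like $t^3+ps^3$, it is anisotropic yet attains only two level classes, so "completion" to three levels is impossible in principle; and since $\mathbf{y}$ is produced after the $\mathbf{e}_i$ are fixed, you cannot choose a matching $\mathbf{e}_i$ a posteriori.

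The paper sidesteps exactly this difficulty, and this is where the two arguments genuinely diverge. There one never asks for three distinct levels: the vectors are chosen \emph{successively}, each new vector subject to the pairwise conditions that make $C$ diagonal on the span with every earlier vector and make $\mathbf{G}$ vanish identically on those spans -- this is precisely what the extra $3r_3$ quadratic and $3r_3+3r_2$ linear constraints in the bound pay for. Among four such vectors, two must share a level (pigeonhole in $\mathbb{F}_3$), and then the binary \emph{diagonal} cubic $C(\mathbf{e}_i)t^3+C(\mathbf{e}_j)$ has a root by Hensel's Lemma because every unit is a cube when $p\equiv 2\pmod 3$; linear independence (Lemma \ref{kbiuovsegz7inffdkjs}) gives non-triviality. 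If you add the pairwise diagonalisation constraints needed to make your level-combining legitimate, you find yourself already in the paper's situation and the whole $\mathbf{y}$-construction becomes unnecessary; as written, without them, the proof does not go through.
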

Theorem \ref{maintheorem1} now easily follows from Lemmas \ref{sejviejfiea}, \ref{awnfkefnsjdsjdsej}, \ref{awjdsfueesej} and \ref{awnfkefnsej}. 
\begin{proof}[Proof of Lemma \ref{awnfkefnsej}]
It is enough to show that $V(r_3,r_2,0;p)\leq V(r_3-1,3r_3+r_2,3r_3+3r_2;p)$. 
We choose a cubic form $C$ and  denote by $\mathbf{G}$ the system comprising all other forms. Suppose that for all non-zero $\mathbf{x}\in\mathbb{Q}_p$ such that $\mathbf{G}(\mathbf{x})=0$ we have $C(\mathbf{x})\neq 0$. Assume in addition that
\begin{align}\label{askfjkufdifisll}
n> V(r_3-1,3r_3+r_2,3r_3+3r_2;p).
\end{align}
By  (\ref{askfjkufdifisll}) there   exists $\mathbf{e}_1$ such that $C(\mathbf{e}_1x_1)=C(\mathbf{e}_1)x_1^4$ and $\mathbf{G}(\mathbf{e}_1x_1)$ is identically zero. We shall successively choose further vectors. A non-zero vector $\mathbf{e}$ is said to have level $r\in\mathbb{F}_3$ if $\nu(C(\mathbf{e}))=r\pmod{3}$.  Suppose we have chosen $s$ vectors $\mathbf{e}_1,\dots ,\mathbf{e}_s$ of different levels such that $C(\mathbf{e}_ix_i)=C(\mathbf{e}_i)x_i^4$ and $\mathbf{G}(\mathbf{e}_ix_i)=0$ for all $1\leq i \leq s$. Since $s\leq 3$ and (\ref{askfjkufdifisll}) we can 
choose an additional vector $\mathbf{e}_{s+1}$ such that
\begin{align}\label{skegjieiwqiure9qw}
C(\mathbf{e}_ix_i+\mathbf{e}_{s+1}x_{s+1})=C(\mathbf{e}_i)x_i^3+C(\mathbf{e}_{s+1})x_{s+1}^3
\end{align}
and $\mathbf{G}(\mathbf{e}_ix_i+\mathbf{e}_{s+1}x_{s+1})$ is identically zero 
for all $1\leq i \leq s$. It follows from Lemma \ref{kbiuovsegz7inffdkjs} that $\mathbf{e}_i$ and $\mathbf{e}_{s+1}$ are linearly independent for each $1\leq i \leq s$. By iterating this argument we find two vectors $\mathbf{e}_i, \mathbf{e}_j$ of the same level for some $1\leq i<j\leq 4$ such that $C(\mathbf{e}_ix_i+\mathbf{e}_{j}x_{j})$ is diagonal.  
After rescaling both the variables and the form we may assume that $\nu(C(\mathbf{e}_i)),\nu(C(\mathbf{e}_j))=0$. Since $p=2\pmod 3$, there exists $t \in \mathbb{F}_p$ such that $\nu(C(\mathbf{e}_it+\mathbf{e}_j))\geq 1$ and $\nu(C'(\mathbf{e}_it+\mathbf{e}_j))=0$. The Lemma then follows by applying Hensel's Lemma.
\end{proof}

\section{Proof of Theorem \ref{maintheorem2}}
We crucially establish a new bound if  $p=3$.
\begin{lemma} Suppose $r_3\geq 1$. Then
\begin{align}
V(r_3,r_2,r_1;3)\leq  V(r_3-1,3r_3+r_2,6r_3+3r_2+r_1;3). \notag
\end{align}
\end{lemma}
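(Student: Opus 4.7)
The plan follows the structure of Lemma~\ref{awnfkefnsej}, with the extra $3r_3$ linear conditions now paying for full (rather than merely pairwise) diagonalisation of every cubic on the span of four chosen vectors. Designate a cubic $C$ of the system and let $\mathbf{G}$ collect the remaining $r_3-1$ cubics, $r_2$ quadratics and $r_1$ linears. Assume $C$ does not vanish on the common zero set of $\mathbf{G}$ and that $n>V(r_3-1,\,3r_3+r_2,\,6r_3+3r_2+r_1;\,3)$. The goal is to build linearly independent vectors $\mathbf{e}_1,\dots,\mathbf{e}_4$ on whose span $\mathbf{G}$ vanishes identically and $C$ takes the fully diagonal form $\sum_{i=1}^{4}c_ix_i^3$ with $c_i=C(\mathbf{e}_i)\neq 0$.

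Inductively, at step $s\to s+1$ the new vector $\mathbf{e}_{s+1}$ has to kill every cross coefficient. Each cubic of $\mathbf{G}$ contributes $1$ cubic, $s$ quadratic and $\binom{s+1}{2}$ linear conditions; diagonalising $C$ adds $s$ quadratic and $\binom{s+1}{2}$ linear conditions; each quadratic of $\mathbf{G}$ contributes $1$ quadratic and $s$ linear conditions; each linear of $\mathbf{G}$ contributes $1$ linear condition. At the worst step $s=3$ this totals $r_3-1$ cubic, $3r_3+r_2$ quadratic and $6r_3+3r_2+r_1$ linear conditions on $\mathbf{e}_{s+1}$, which the hypothesis on $n$ permits us to solve; linear independence follows from Lemma~\ref{kbiuovsegz7inffdkjs}. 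The increment compared with Lemma~\ref{awnfkefnsej} is the $\binom{s}{2}r_3$ conditions $C(\mathbf{e}_i,\mathbf{e}_j,\mathbf{e}_{s+1})=0$ for $1\le i<j\le s$, which promote pairwise to full diagonalisation.

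To finish, one must exhibit a nontrivial zero of the four-variable diagonal cubic $\sum_{i=1}^{4}c_it_i^3$ over $\mathbb{Q}_3$. After rescaling $C$ and the individual $\mathbf{e}_i$'s, I may assume $\nu(c_i)\in\{0,1,2\}$ with $0$ attained. Classify the indices by level $\nu(c_i)\bmod 3$ and by cube class in $\mathbb{Z}_3^{\ast}/(\mathbb{Z}_3^{\ast})^3$; whenever two indices $i,j$ share both, a unit $T_0\in\{\pm 1\}$ yields $\nu(c_iT_0^3+c_j)\ge 2$, while $\nu(3c_iT_0^2)=1$ and $\nu(3c_iT_0)\ge 1$, so Lemma~\ref{akfjewajfiwjw} lifts $T_0$ to a genuine zero of $C$ on $\mathrm{span}(\mathbf{e}_i,\mathbf{e}_j)$, contradicting the standing assumption. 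The main obstacle is the Davenport--Lewis type configurations (prototype $c=(1,2,4,9)$) in which no two of the four coefficients match in both level and cube class and the diagonal form has no nontrivial zero in $\mathbb{Q}_3^{4}$ at all. To avoid these one needs to exploit the freedom in the choice of $\mathbf{e}_4$: since the first three vectors can be chosen with levels covering $\{0,1,2\}$, the surplus variables left by the bound $6r_3+3r_2+r_1$ allow us to pick $\mathbf{e}_4$ with $C(\mathbf{e}_4)$ in an already represented level and cube class, so the final pigeonhole succeeds and Hensel lifting completes the proof.
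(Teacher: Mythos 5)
Your constraint counting for the choice of each new vector (full diagonalisation of $C$ plus identical vanishing of $\mathbf{G}$ on the span, giving $r_3-1$ cubic, $3r_3+r_2$ quadratic and $6r_3+3r_2+r_1$ linear conditions at the worst step $s=3$) agrees with the paper. The gap is in your endgame. You correctly observe that a four-variable diagonal cubic over $\mathbb{Q}_3$ can be anisotropic, but your proposed escape --- that the ``surplus variables'' allow you to pick $\mathbf{e}_4$ with $C(\mathbf{e}_4)$ in a prescribed level and cube class, and that the first three vectors ``can be chosen with levels covering $\{0,1,2\}$'' --- is not something the method delivers. The hypothesis $n>V(r_3-1,3r_3+r_2,6r_3+3r_2+r_1;3)$ only guarantees a non-trivial common zero of a collection of forms of degree at most three, i.e.\ it lets you impose \emph{vanishing} conditions on the new vector; it gives no control whatsoever over the value $C(\mathbf{e}_{s+1})$, its $3$-adic valuation, or its cube class. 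So the final pigeonhole is unsupported, and as stated the argument does not close.

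The paper avoids the anisotropic obstruction dynamically rather than statically. Whenever two of the chosen vectors $\mathbf{e}_i,\mathbf{e}_j$ merely share a \emph{level} (no cube-class matching is needed), one rescales so that $\nu(C(\mathbf{e}_i))=\nu(C(\mathbf{e}_j))=0$ and picks $t_0\in\{\pm1\}$ with $3\mid C(\mathbf{e}_it_0+\mathbf{e}_j)$; then either Lemma~\ref{akfjewajfiwjw} applies to $f(t)=C(\mathbf{e}_it+\mathbf{e}_j)$, producing a zero of $C$ on a subspace where $\mathbf{G}$ vanishes (the desired contradiction), or $\mathbf{e}_i':=\mathbf{e}_it_0+\mathbf{e}_j$ has level exactly one higher. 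In the latter case the two vectors are replaced by $\mathbf{e}_i'$ and a \emph{fresh} vector is chosen by invoking the bound on $n$ again --- here is where full (not pairwise) diagonalisation earns its keep, since it guarantees that $C$ stays diagonal on the span after such a merge. Iterating, one reaches three vectors with valuations $0,0,1$, and then the explicit substitution $x_j=-C(\mathbf{e}_i)$, followed by the choice of $x_k$ according to whether $3^{-1}C(\mathbf{e}_i+\mathbf{e}_jx_j)$ is a unit, makes $f(t)=C(\mathbf{e}_it+\mathbf{e}_jx_j+\mathbf{e}_kx_k)$ satisfy $\nu(f(1))\geq2$, $\nu(f'(1))=1$, $\nu(f''(1))\geq1$, so Hensel's Lemma finishes. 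To repair your write-up you would need to replace the final pigeonhole by this merge-and-replace iteration (or some other device that genuinely uses the hypothesis on $n$ anew), since no static analysis of the four coefficients alone can succeed.
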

Theorem \ref{maintheorem2} now follows in conjunction with Lemmas \ref{sejviejfiea} and \ref{awnfkefnsjdsjdsej}. Also note the improvement provided by Lemma \ref{awnfkefnsej} if $p=2\pmod{3}$. 
\begin{proof}
It is suffices to prove that $V(r_3,r_2,0;3)\leq V(r_3-1,3r_3+r_2,6r_3+3r_2;3)$. 
We choose a cubic form $C$ and  denote by $\mathbf{G}$ the system comprising all other forms. Suppose that for all non-zero $\mathbf{x}\in\mathbb{Q}_3$ such that $\mathbf{G}(\mathbf{x})=0$ we have $C(\mathbf{x})\neq 0$. Assume in addition that 
\begin{align}\label{askfjkufdifis}
n> V(r_3-1,3r_3+r_2,6r_3+3r_2;3).
\end{align}
By (\ref{askfjkufdifis}) we can successively choose non-zero vectors $\mathbf{e}_1,\mathbf{e}_2,\mathbf{e}_3,\mathbf{e}_4$ such that 
\begin{align}\label{aefj932929re7ejd}
C(\mathbf{e}_1x_1+\dots +\mathbf{e}_4x_4)=C(\mathbf{e}_1)x_1^3+C(\mathbf{e}_2)x_2^3+C(\mathbf{e}_3)x_3^3+C(\mathbf{e}_4)x_4^3
\end{align} 
and $\mathbf{G}(\mathbf{e}_1x_1+\dots +\mathbf{e}_4x_4)$ is identical zero. By  Lemma \ref{kbiuovsegz7inffdkjs} are $\mathbf{e}_1,\mathbf{e}_2,\mathbf{e}_3$ and $\mathbf{e}_4$  linearly independent.  A vector $\mathbf{e}\in \mathbb{Q}_3-0$ is said to have level $r\in\mathbb{F}_3$ if $\nu(C(\mathbf{e}))=r\pmod{3}$.  Suppose there are two vectors $\mathbf{e}_i,\mathbf{e}_j$  of the same level for some $1\leq i<j\leq 4$. We rescale both  the variables and the form such that $\nu(\mathbf{e}_i),\nu(\mathbf{e}_j)=0$.
Since $C(\mathbf{e}_i),C(\mathbf{e}_j)=\pm 1\pmod{3}$ there exists $t_0\in \{1,-1\}$ such that
$3\mid C(\mathbf{e}_it_0+\mathbf{e}_j)$.  If we set $f(t)=C(\mathbf{e}_it+\mathbf{e}_j)$ either $\nu(f(t_0))\geq 2$, $\nu(f'(t_0))=1$ and $\nu(f''(t_0))\geq 1$ such that Hensel's Lemma can be applied or $\mathbf{e}_i':=\mathbf{e}_it_0+\mathbf{e}_j$ is of level $1$. Thus we can replace three vectors of the same level $r$ by two of level $r$ and $r+1$. We then choose an additional fourth vector.
By repeating this argument, relabelling and rescaling we find vectors $\mathbf{e}_i$, $\mathbf{e}_j$, $\mathbf{e}_k$ for some $1\leq i<j<k\leq 4$ such that $\nu(\mathbf{e}_i),\nu(\mathbf{e}_j)=0$ and $\nu(\mathbf{e}_k)=1$. We set  $ x_j=-C(\mathbf{e}_i)$ such that $3\mid C(\mathbf{e}_i+\mathbf{e}_jx_j)$. Thus we can write $C(\mathbf{e}_i+\mathbf{e}_jx_j)=3s$ and $C(\mathbf{e}_k)=3l$ where $3\nmid l$. We set $x_3=-sl$ if $s$ is a $p$-adic unit and $x_3=0$ otherwise. If we write $f(t)=C(\mathbf{e}_it+\mathbf{e}_jx_j+\mathbf{e}_kx_k)$, then $\nu(f(1))\geq 2$, $\nu(f'(1))=1$, $\nu(f''(1))\geq 1$ and Hensel's Lemma applies. 
\end{proof}
\subsection*{Acknowledgements} I wish to thank my supervisor Professor D.R. Heath-Brown for suggesting this topic, his encouragement and helpful comments.


\bibliographystyle{amsplain}
\bibliography{Bib}
 
\noindent\textsc{\small Mathematisches Institut, Bunsenstr. 3-5, 37073 Göttingen,}\\ \textsc{Germany}\\
\emph{ \small E-mail address:} \texttt{jdumke@uni-math.gwdg.de}

\end{document}